\newtheorem{theorem}{Theorem}[section]
\newtheorem{conjecture}[theorem]{Conjecture}
\newtheorem{corollary}[theorem]{Corollary}
\newtheorem{lemma}[theorem]{Lemma}
\newtheorem{proposition}[theorem]{Proposition}
\newtheorem{definition}[theorem]{Definition}
\theoremstyle{remark}
\newtheorem*{remark}{Remark}
\numberwithin{equation}{section}
\newtheorem*{claim}{Claim}
\newcommand{\C}{\mathbb{C}}
\newcommand{\D}{\mathbb{D}}
\newcommand{\N}{\mathbb{N}}
\newcommand{\mcp}{\mathcal{P}}
\newcommand{\mcu}{\mathcal{U}}
\newcommand{\mcj}{\mathcal{J}}
\DeclareMathOperator*{\dist}{dist}
\subjclass[2010]{Primary 32A35; Secondary 32A60, 47A16, 30C15.}
\providecommand{\keywords}[1]
{
  \small
  \textbf{\textit{}} #1
}
\keywords{Hardy space; bidisk; optimal polynomial approximants; zero sets of polynomials; extremal problems.}
\begin{document}
\title[Shanks]{A counterexample to the Weak Shanks Conjecture}
\author[B\'en\'eteau]{Catherine B\'en\'eteau}
\address{Department of Mathematics, University of South Florida, 4202 E. Fowler Avenue,
Tampa, Florida 33620-5700, USA.} \email{cbenetea@usf.edu}
\author[Khavinson]{Dmitry Khavinson}
\address{Department of Mathematics, University of South Florida, 4202 E. Fowler Avenue,
Tampa, Florida 33620-5700, USA.} \email{dkhavins@usf.edu@usf.edu}
\author[Seco]{Daniel Seco}
\address{Departamento de An\'alisis Matem\'atico e IMAULL, Universidad de La Laguna, Avenida Astrof\'isico Francisco S\'anchez s/n, 38007 La Laguna (Santa Cruz de Tenerife), Spain.} \email{dsecofor@ull.edu.es}
\date{\today}

\begin{abstract}
We give an example of a function $f$ non-vanishing in the closed bidisk and the affine polynomial minimizing the norm of $1-pf$ in the Hardy space of the bidisk among all affine polynomials $p$. We show that this polynomial vanishes inside the bidisk. This provides a counterexample to the weakest form of a conjecture due to Shanks that has been open since 1980, with applications that arose from digital filter design. This counterexample has a simple form and follows naturally from \cite{BKLSS}, where the phenomenon of zeros seeping into the unit disk was already observed for similar minimization problems in one variable.

\end{abstract}

\maketitle

\section{Introduction}\label{Intro}

In the theory of invariant subspaces, a central role is played by the shift operator of multiplication by $z$ and its action on reproducing kernel Hilbert spaces. A celebrated case is that of the Hardy space over the unit disk, formed by functions whose Maclaurin coefficients are square-summable. There, a complete description of invariant subspaces is available, thanks to the work of Beurling (see, e.g., \cite{Ho}) that exploits the role played by so-called inner functions and leads to a characterization of cyclic vectors for the operator.  More than 80 years later, the equivalent problem in two dimensions is far from solved. When moving into the several complex variables world, a natural analogue of the one-dimensional shift is the tuple $(S_1, S_2)$ of shifts on each of the variables, given by \[S_1 (g)(z_1,z_2) = z_1 g(z_1, z_2); \quad S_2 (g)(z_1,z_2) = z_2 g(z_1, z_2).\] This is particularly relevant for the setting of the present article.  More precisely, let $\D = \{ z \in \C: |z|< 1 \}$ and let $\D^2 = \D \times \D.$  The Hardy space of the bidisk $H^2(\D^2)$ is the space of those holomorphic functions $g :  \D^2 \rightarrow \C$ given by a Taylor series around 0, \[g(z_1,z_2) = \sum_{k,l=0}^{\infty} a_{k,l} z_1^k z_2^l,\]
for which the norm \[\|g\|_{H^2(\D^2)} = \left( \sum_{k,l=0}^{\infty} |a_{k,l}|^2 \right)^{1/2}\]
is finite.  In this space, polynomials form a dense subclass, the shifts $S_1$ and $S_2$ are bounded operators (in fact, isometries) and there exists a \emph{reproducing kernel} at any point $(w_1,w_2) \in \D^2$. From the work of Rudin (see, e.g., \cite{Bunchofstuff}), it is known that the concept of inner function fails to completely describe the subspaces that are invariant under the shifts $S_1$ and $S_2$ simultaneously. We refer the reader to  \cite{Bunchofstuff} for more on this space and the theory of its invariant subspaces.

There is thus significant interest in the complex analysis community to understand cyclic functions (see below) in $H^2(\D^2)$.  With this in mind, the authors of the current article and their collaborators extended to this context the study of optimal approximation techniques that had been introduced in \cite{BCLSS1} for a family of one variable spaces. Recall that a function $g  \in H^2(\D^2)$ is called \emph{cyclic}, if $\mathcal{P}g$ is a dense subspace of $H^2(\D^2)$, where $\mathcal{P}$ is the space of all polynomials of two variables.  Equivalently, $g$ is cyclic if the smallest (closed) subspace of $H^2(\D^2)$ containing it and invariant under both $S_1$ and $S_2$ is the whole of $H^2(\D^2)$. Because polynomials are dense in $H^2(\D^2)$, it is then clear that the constant $1$ is a cyclic function, and therefore an alternative definition is that $g$ is cyclic if there exists a sequence of polynomials $\{p_n\}_{n \in \N}$ such that
\[\lim_{n \rightarrow \infty} \|1-p_n g\|_{H^2(\D^2)} =0.\]
In some earlier papers in the 1960s, such functions $g$ were also called weakly invertible (see, e.g., \cite{Sh,ASS} and references therein). 

Thus we set ourselves the task of understanding the problem of minimizing, for each $f \in H^2(\D^2)$ and each $n\in \N$,  the norm $\|1-pf\|_{H^2(\D^2)}$ over the set $\mathcal{P}_n$ of all polynomials of degree less than or equal to $n$. The norm minimizer is called an \emph{optimal polynomial approximant}, or \emph{OPA}, for $1/f$ 
 of degree $n$. Here one should choose a definition of degree or an enumeration of the monomials. In this paper, $\mathcal{P}_n$ will denote the set of 2-variable polynomials of $z_1$ and $z_2$ spanned by monomials $z_1^k z_2^l$ with $k+l \leq n$.  

Although OPAs as discussed above were introduced to study cyclic functions, they had been studied much earlier under a different name in connection with work on signal processing in engineering. In 1963, Robinson \cite{Ro} introduced the concept of ``least squares inverses": given a  sequence $a=(a_0, a_1, \ldots, a_m)$ of real numbers, find a sequence $b=(b_0, b_1, \ldots, b_n)$ of real numbers such that $b \star a$ (the discrete convolution of the sequences $b$ and $a$) approximates the ``unit spike", that is,
$$\| b \star a - (1, 0, \ldots, 0) \|_{\ell^2}$$ is of smallest norm.  Associating the sequence $a$ with the function $f(t) = \sum_{k=0}^m a_k e^{ikt}$ and $b$ with $p(t) = \sum_{k=0}^n b_k e^{ikt}$ shows that Robinson's problem is the same as finding the OPA of $1/f$ of degree $n$ in $H^2(\D)$ for a polynomial $f$. In 1980, Chui \cite{Ch} took this a step further and studied ``double least squares inverses": given a polynomial $f$, for each $n$, construct the $H^2(\D)$ OPA $p_n,$ then for each $k,$ construct the OPA $q_{n,k}$ of $p_n$. He proved in particular that the OPAs do not have zeros in the closed unit disk, and then a couple of years later, Chui and Chan \cite{CC} took advantage of this non-vanishing property to apply their ideas to recursive digital filter design.  In 1985, Izumino \cite{Iz} generalized the theory to arbitary $f \in H^2$ and reformulated the problem using operator theory. This circle of ideas was also linked to earlier work in the 1970s of Justice, Shanks and Treitel \cite{STJ} in the design of digital filters in function spaces of several variables. One goal of signal processing is to design filters that will transform a signal into different forms, say for compression or edge detection, and then, when needed, be able to reconstruct the original signal. In particular, if the signal lives in a space where a given function $f$ (the filter) is cyclic, then it is possible to completely reconstruct the signal without loss of information. Several authors were interested in whether this reconstruction would be \emph{stable}, which happened to be equivalent to the function $f$ generating a sequence of OPAs that do not vanish anywhere on the bidisk. In particular, Shanks and his co-authors \cite{STJ} conjectured in 1972 that for any polynomial $f$, the $H^2(\D^2)$ OPAs for $1/f$ are zero-free in $\D^2$, as is the case for one variable. Genin and Kamp \cite{GK} shortly thereafter gave a counterexample and constructed a polynomial $f$ with an $H^2(\D^2)$ OPA with a zero in $\D^2$. Their $f$ itself had a zero in $\D^2.$ However, in 1976, Anderson and Jury \cite{AJ} proved that the conjecture is true for certain polynomials of low and restricted degree. Two notable recent papers of M. Sargent and A. Sola in 2021 and 2022 \cite{SS1,SS2} simplified the Genin and Kamp  counterexample and discussed OPAs and orthogonal polynomials in a variety of spaces of several variables. See also \cite{BenCen} for a survey of OPAs of one and several variables and a more precise description of the filter design process. 

In 1980, Delsarte, Genin and Kamp \cite{DGK} stated the ``weakest" form of the Shanks Conjecture as follows:
\begin{conjecture}[The Weak Shanks Conjecture]
Suppose $f$ is a polynomial with no zeros in $\overline{\D^2}$. Then its OPAs are zero-free in $\D^2.$
\end{conjecture}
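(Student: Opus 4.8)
The statement to be proved is the natural two-variable analogue of Chui's theorem, so \emph{the plan is to} mimic the one-variable argument and hope it survives the passage to the bidisk. First I would record the variational characterization of the OPA: for fixed $n$, the optimal polynomial $p_n \in \mathcal{P}_n$ is the unique minimizer of $\|1 - pf\|_{H^2(\D^2)}$, and it is characterized by the orthogonality relations $\langle 1 - p_n f, \, q f\rangle = 0$ for every $q \in \mathcal{P}_n$; equivalently, $p_n f$ is the orthogonal projection of the constant $1$ onto the finite-dimensional subspace $f\mathcal{P}_n \subset H^2(\D^2)$. The strategy is then a proof by contradiction: assuming $p_n(a,b) = 0$ for some $(a,b) \in \D^2$, I would try to construct a competitor $\tilde p \in \mathcal{P}_n$ with $\|1 - \tilde p f\|_{H^2(\D^2)} < \|1 - p_n f\|_{H^2(\D^2)}$, contradicting optimality and thereby forcing $p_n$ to be zero-free.

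The engine of the one-variable proof is a Blaschke factor swap, and the first thing I would attempt is to import it. Writing $\|1 - pf\|^2 = 1 - 2\re\langle pf, 1\rangle + \|pf\|^2$, in one variable both surviving terms are controlled by boundary data: $\|pf\|^2$ is an $L^2$ integral over the circle and $\langle pf, 1\rangle = (pf)(0)$. If $p_n$ had a zero $z_0$ in $\overline{\D}$ one factors $p_n = (z - z_0)r$ and replaces the factor by $(1 - \bar z_0 z)$, obtaining $\tilde p \in \mathcal{P}_n$ of the same degree; since $|z - z_0| = |1 - \bar z_0 z|$ on $|z| = 1$, the norm term $\|pf\|$ is unchanged, while the value at the origin grows in modulus (from $|z_0|\,|r(0)f(0)|$ to $|r(0)f(0)|$), so after a harmless phase rotation $\re\langle \tilde p f, 1\rangle$ strictly increases and the objective strictly decreases. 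This is the complete one-variable mechanism, and I would check whether its two ingredients --- a linear factor to swap, and a modification that preserves the norm on the boundary --- persist in $H^2(\D^2)$.

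The hard part --- and, I suspect, the fatal one --- is precisely that both ingredients fail on the bidisk. A zero of $p_n$ at $(a,b)$ lies on a one-complex-dimensional variety $\{p_n = 0\} \subset \C^2$, not at an isolated point, and a bivariate polynomial is generically irreducible; there is in general \emph{no} linear factor of $p_n$ to peel off and swap, so the construction has nothing to act on. Even granting a factorization in one of the variables, a Blaschke swap in $z_1$ (with $z_2$ frozen) would require a factor $(1 - \bar a z_1)$ whose modulus matches on each slice $\{|z_1| = 1\}$, but the $H^2(\D^2)$ norm integrates over the whole torus $\T^2$, and the competitor must lie in $\mathcal{P}_n$ with the same total degree; reconciling torus-modulus preservation with the degree constraint across all slices appears impossible.

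Because the one-variable argument turns on geometric accidents special to dimension one, I would expect the factor-swap strategy to collapse here, leaving interior zeros unobstructed --- precisely the ``seeping'' of zeros into the disk already observed in the one-variable extremal problems of \cite{BKLSS}. Barring a genuinely new idea (for instance, a global argument-principle or reproducing-kernel identity that bypasses factorization entirely), this breakdown is strong enough that I would begin to doubt the statement itself, and I would switch to probing the smallest nontrivial case, the affine approximants with $n = 1$, searching among zero-free $f$ for an explicit example whose OPA vanishes inside $\D^2$.
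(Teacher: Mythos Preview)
Your diagnosis is correct: the conjecture is false, and the paper's contribution is a counterexample, not a proof. Your proposal is therefore not a proof attempt that fails on a technicality --- it is (rightly) an explanation of why the one-variable Blaschke swap cannot be transplanted, ending with the correct instinct to hunt for a counterexample among affine OPAs. Where you stop is exactly where the paper begins, so the gap is the entire construction.

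The missing idea is this. Functions of the form $f(z_1,z_2)=F\bigl(\tfrac{z_1+z_2}{2}\bigr)$ embed a one-variable weighted space $H^2_\omega$ isometrically into $H^2(\D^2)$, with weights $\omega_k=\binom{2k}{k}4^{-k}$ (Lemma~\ref{weights}); these weights make $H^2_\omega$ equal as a set, with an equivalent but not identical norm, to the Bergman-type space $A^2_{-1/2}$. From \cite{BKLSS} the extremal function there, $F(z)=(1-z\sqrt{2/3})^{-5/2}$, has a degree-$1$ OPA with a zero in $\D$. Lifting $F$ gives $f(z_1,z_2)=(1-(z_1+z_2)/\sqrt{6})^{-5/2}$, zero-free on $\overline{\D^2}$; symmetry forces its $H^2(\D^2)$ affine OPA to be $a+b(z_1+z_2)$, and the interior-zero condition $|a|<2|b|$ reduces to the one-variable inequality $|\langle F,zF\rangle_\omega|>\|zF\|_\omega^2$. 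Because the norms are only equivalent, this is not automatic: the paper verifies it by explicit tail estimates on the first $25$ Taylor coefficients. Finally, $f$ is not a polynomial, but continuity of OPA coefficients pushes the interior zero onto a sufficiently high Taylor polynomial of $f$, which is the actual polynomial counterexample. Your proposal anticipates the \emph{shape} of the search (affine OPA, zero-free $f$) but contains none of these steps.
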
 
The authors of \cite{RRS} thought that they have a counterexample to the Weak Shanks Conjecture, but Delsarte, Genin and Kamp showed that the proof failed in \cite{DGK2}, which was later also confirmed by Karivaratharajan and Reddy in \cite{RK}. In a recent preprint, Felder \cite{Fe} explores some specific cases where the conjecture holds. 

 In this article, we provide a counterexample to this conjecture. 

Our plan is as follows: In Section \ref{prelim}, we outline the argument, introduce a large family of function spaces of one variable, and explain our choice of a two-variable function $f$, taken by extending a one-variable function $F$ that played a special role in the study of the zeros of OPAs for one of those spaces. In Section \ref{prf1}, we show how to reduce the proof of existence of a counterexample (see Lemma \ref{lem1}) to checking an  explicit inequality (see Lemma \ref{lem2}). This inequality is numerically obvious yet we prove it analytically in Section \ref{prf2}. We conclude in Section \ref{further} with a few remarks on possible further directions of study and remaining unsolved questions.

\section{Preliminaries}\label{prelim}

\subsection{Outline of the argument}

From now on, we denote by $f$ the function $\D^2 \rightarrow \C$ given by \begin{equation}\label{eqn101}
f(z_1,z_2)= \left(1- \frac{z_1+z_2}{\sqrt{6}}\right)^{-\frac{5}{2}}.
\end{equation}
We also denote by $p_1$ the OPA of degree 1 to $1/f$, that is, the function of the form 
\begin{equation}\label{eqn102}
p(z_1,z_2)= \alpha + \beta z_1 + \gamma z_2,
\end{equation}
with $\alpha$, $\beta$ and $\gamma$ in $\C$ minimizing the norm 
\begin{equation}\label{eqn103}
\|1-pf\|_{H^2(\D^2)}
\end{equation}
among all such polynomials $p \in \mathcal{P}_1.$
The existence and uniqueness of $p_1$ is clear since $p_1f$ is, by definition, the orthogonal projection of $1$ onto the 3-dimensional subspace $\mathcal{P}_1 f$ of $H^2(\D^2)$. Because of the uniqueness of $p_1$, the fact that $f$ has real Taylor coefficients, the symmetry of $f$ with respect to the two variables, and the definition of the $H^2(\D^2)$ norm, it is clear that $p_1$ must be of the form  
\begin{equation}\label{eqn104}
p_1(z_1,z_2)= a + b (z_1 + z_2),
\end{equation}
for some choice of \emph{real} numbers $a, b$. From now on, we also fix this notation for the coefficients of $p_1$. Notice that $f$ satisfies the assumptions needed in the Weak Shanks Conjecture except for that of being a polynomial. However, since OPAs and their coefficients vary continuously within the space, and since the Taylor polynomials of $f$ converge uniformly to $f$ on compact subsets of $\sqrt{\frac{3}{2}} \cdot \D^2$, the domain of $f$, and in particular in the closed bidisk, in order to disprove the Weak Shanks Conjecture, it suffices to show the following claim regarding $p_1$.

\begin{claim}
 $p_1$  has a zero inside the bidisk.
\end{claim}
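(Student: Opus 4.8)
\emph{The plan is} to reduce the Claim to a single numerical inequality about the Taylor coefficients of $f$ and then to prove that inequality. First, since $p_1(z_1,z_2)=a+b(z_1+z_2)$ has real coefficients, its zero set in $\C^2$ is either empty (if $b=0\neq a$) or the affine set $\{z_1+z_2=-a/b\}$; as $(z_1,z_2)$ ranges over $\D^2$ the value $z_1+z_2$ fills exactly the open disk of radius $2$ about the origin, so $p_1$ has a zero in $\D^2$ precisely when $b\neq0$ and $-a/b\in(-2,2)$, i.e. when $|a|<2|b|$. To get at $a/b$, note that $p_1f$ is the orthogonal projection of $1$ onto $\mathcal{P}_1f$, which by the symmetry already noted equals $\operatorname{span}\{f,\,(z_1+z_2)f\}$; since $(z_1+z_2)f$ has no constant term, $\langle 1,(z_1+z_2)f\rangle_{H^2(\D^2)}=0$, so testing $1-p_1f$ against $(z_1+z_2)f$ yields the normal equation
\[
a\,\langle f,(z_1+z_2)f\rangle_{H^2(\D^2)}+b\,\|(z_1+z_2)f\|_{H^2(\D^2)}^2=0 .
\]
All Taylor coefficients of $f$ are strictly positive, so both expressions here are positive; together with $\langle 1,f\rangle=1\neq0$ (which prevents $p_1\equiv0$) this forces $b\neq0$ and $a/b<0$, with $-a/b=\|(z_1+z_2)f\|_{H^2(\D^2)}^2\big/\langle f,(z_1+z_2)f\rangle_{H^2(\D^2)}$. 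Hence the Claim is equivalent to the strict inequality
\[
\|(z_1+z_2)f\|_{H^2(\D^2)}^2 \;<\; 2\,\langle f,(z_1+z_2)f\rangle_{H^2(\D^2)} .
\]

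Next I would make this inequality explicit. Writing $f=\sum_{n\ge0}d_n(z_1+z_2)^n$, the binomial series gives $d_0=1$ and $d_{n+1}/d_n=\frac{2n+5}{2(n+1)\sqrt6}>0$, so every $d_n>0$. Since in $H^2(\D^2)$ the homogeneous parts of distinct degrees are orthogonal and $\|(z_1+z_2)^n\|_{H^2(\D^2)}^2=\binom{2n}{n}$, the inequality above becomes
\[
\sum_{n\ge0}\binom{2n+2}{n+1}\,d_n\bigl(d_n-2d_{n+1}\bigr)<0 .
\]
Feeding in the ratio $d_{n+1}/d_n$, the $n$-th summand equals a positive quantity $u_n:=\binom{2n+2}{n+1}d_n^2\big/\bigl((n+1)\sqrt6\bigr)$ times $(n+1)\sqrt6-(2n+5)$, which is negative exactly for $n\le5$; rearranging, the inequality is equivalent to $B/A<\frac{3\sqrt6+4}{2}\approx5.67$, where $A=\sum_{n\ge0}u_n$ and $B=\sum_{n\ge0}n\,u_n$.

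The remaining step is to prove this numerical inequality, and this is where I expect the real difficulty to be: the actual value of $B/A$ is about $4.8$, so the gap to the threshold $\frac{3\sqrt6+4}{2}$ is only moderate, and crude tail estimates will not suffice. My approach would be to bound $A$ and $B$ by splitting each series into a head $n\le N$, evaluated exactly (each $\binom{2n+2}{n+1}d_n^2$ is rational, so each head is an explicit rational multiple of $\sqrt6$), and a tail $n>N$, which is dominated by a geometric series because
\[
\frac{u_{n+1}}{u_n}=\frac{(2n+3)(2n+5)^2}{12(n+1)(n+2)^2}
\]
is decreasing in $n$ with limit $2/3$, hence at most $u_{N+1}/u_N<1$ throughout the tail. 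Taking $N$ large enough that the head/tail bounds still certify $B/A<\frac{3\sqrt6+4}{2}$ then finishes the proof; the technical content lies entirely in making the tail estimate sharp enough to fit the modest slack. (One could also try to avoid long explicit sums by passing to the one-variable integral obtained from the substitution $z_1+z_2=2e^{is}\cos t$, under which the $H^2(\D^2)$ norm of a function of $z_1+z_2$ becomes a one-dimensional integral involving $\,{}_2F_1\!\left(\tfrac52,\tfrac52;1;\tfrac{2\cos^2 t}{3}\right)$, and estimating that directly.)
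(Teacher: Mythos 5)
Your reduction is correct and is essentially the paper's own argument: they likewise reduce the Claim to $|a|<2|b|$, use the normal equation and the positivity of the Taylor coefficients to turn this into the strict inequality $\|(z_1+z_2)f\|^2<2\langle f,(z_1+z_2)f\rangle$, expand in powers of $z_1+z_2$ using $\|(z_1+z_2)^n\|^2_{H^2(\D^2)}=\binom{2n}{n}$ (phrased there as an isometry onto a weighted one-variable space $H^2_\omega$ with $\omega_k=\binom{2k}{k}2^{-2k}$), and certify the inequality by computing the terms $j\le 24$ exactly as rationals and bounding the tail geometrically via precisely the term ratio $\frac{(2n+3)(2n+5)^2}{12(n+1)(n+2)^2}$ decreasing to $2/3$ that you identify. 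The one piece you defer --- the explicit finite computation --- is exactly what the paper's Section 4 table supplies, and your estimate $B/A\approx 4.8$ against the threshold $\frac{3\sqrt6+4}{2}\approx 5.67$ is consistent with the margin that table yields, so your plan closes.
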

 This will imply that the Weak Shanks Conjecture is false for sufficiently large degree Taylor polynomials of $f$ around 0. From the form of $p_1$ obtained in \eqref{eqn104}, the restriction of $p_1$ to the pairs of real numbers on the diagonal is a continuous real function for $z_1=z_2 \in [-1,1]$ with values $p_1(1,1) = a+2b$ and $p_1(-1,-1) = a-2b$. In order to show the existence of a zero of $p_1$ inside the bidisk, we just need to show the following:

\begin{lemma}\label{lem1}
\[|a| < 2 |b|.\]
\end{lemma}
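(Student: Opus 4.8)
The plan is to compute the coefficients $a$ and $b$ of $p_1$ explicitly, or at least to compute the relevant ratio $a/b$, by solving the $3\times 3$ normal equations coming from the orthogonal projection of $1$ onto $\mathcal{P}_1 f = \operatorname{span}\{f, z_1 f, z_2 f\}$. Concretely, writing $p_1 f$ as the projection, the conditions are $\langle 1 - p_1 f, f\rangle = 0$, $\langle 1 - p_1 f, z_1 f\rangle = 0$, and (by symmetry, redundantly) $\langle 1 - p_1 f, z_2 f\rangle = 0$. Using the form \eqref{eqn104}, this collapses to a $2\times 2$ linear system in $a$ and $b$ whose entries are the Gram matrix inner products $\langle f, f\rangle$, $\langle z_1 f, f\rangle$, $\langle z_1 f, z_1 f\rangle$, $\langle z_2 f, z_1 f\rangle$, together with the right-hand side entries $\langle 1, f\rangle$ and $\langle 1, z_1 f\rangle$. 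So the first step is: set up these six (really fewer, by symmetry) inner products.

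The second step is to evaluate those inner products. Since $f(z_1,z_2) = (1 - (z_1+z_2)/\sqrt 6)^{-5/2}$, its Taylor coefficients are binomial-type coefficients: expanding $(1-w)^{-5/2} = \sum_m \binom{m+3/2}{m} w^m$ and then $w^m = ((z_1+z_2)/\sqrt 6)^m = 6^{-m/2}\sum_{k} \binom{m}{k} z_1^k z_2^{m-k}$, one reads off $a_{k,l}$ in closed form as a product of Pochhammer symbols over $6^{(k+l)/2}$. The $H^2(\D^2)$ inner products $\langle z_1^{i} z_2^{j} f, z_1^{k} z_2^{l} f\rangle$ are then sums of squares of these coefficients (shifted appropriately), which should telescope to hypergeometric values — I expect each to reduce to an evaluation of a ${}_2F_1$ or ${}_3F_2$ at $1$, or equivalently to a ratio of Gamma functions. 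This is where Lemma \ref{lem2} (the explicit inequality promised in the outline) will enter: rather than evaluating everything in closed form, it should suffice to reduce the inequality $|a| < 2|b|$, after clearing the (positive) Gram determinant, to a single polynomial or series inequality in rational numbers, which is then the content of the separately stated Lemma \ref{lem2}.

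The third step is to assemble: solve the $2\times 2$ system by Cramer's rule, so that $a$ and $b$ are each a ratio with the same positive denominator $\det$, and hence $|a| < 2|b| \iff |a\cdot\det| < 2|b\cdot\det|$, i.e. a comparison of two explicit numerators. One then checks the sign of $b$ (I expect $b<0$, since multiplying $f$ by something that decreases it near the corner $(1,1)$ is what pushes the zero in) and reduces to the claimed inequality. The main obstacle I anticipate is the bookkeeping in Step 2: getting the Gram-matrix entries as clean closed forms (or clean enough sums) without sign or indexing errors, since everything hinges on the exact constants $1/\sqrt6$ and the exponent $5/2$ — these particular values are presumably chosen precisely so that the final numerical margin in $|a|<2|b|$ is comfortably away from $0$ but the closed forms remain manageable. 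Once the inner products are pinned down, the rest is linear algebra plus the deferred inequality of Lemma \ref{lem2}.
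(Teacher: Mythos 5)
Your strategy is sound and its linear-algebra core coincides with what the paper actually does, but you organize the computation quite differently, and the difference matters for feasibility. The paper never solves the $2\times 2$ normal equations directly in two variables: it first observes (Lemma \ref{weights}) that $F\mapsto F\bigl(\tfrac{z_1+z_2}{2}\bigr)$ is an isometry of a one-variable weighted space $H^2_\omega$, $\omega_k=\binom{2k}{k}4^{-k}$, onto the subspace of $H^2(\D^2)$ spanned by powers of $z_1+z_2$; since the symmetric form \eqref{eqn104} puts $p_1f$ in that subspace, the whole problem collapses to the one-variable first-order OPA for $F(w)=(1-w/\sqrt{3/2})^{-5/2}$, whose zero is the known quotient $\|wF\|^2_\omega/\langle F,wF\rangle_\omega$ from \eqref{extremal}. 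That reduction buys two things your direct route does not: the Chu--Vandermonde identity turns every Gram entry into a \emph{single} sum $\sum_j |a_j|^2\omega_{j+\cdot}$ rather than the double sums over $a_{k,l}$ you would face, and it connects $f$ to the known extremal function of Theorem \ref{A2beta}, explaining why the inequality should hold at all. Your plan would still work in principle, but two of your expectations need correcting. First, the Gram entries do not reduce to clean ${}_2F_1$ or Gamma-ratio closed forms; the paper instead truncates the series at $j=25$ and controls the tails by explicit ratio estimates, and that truncation-plus-tail argument is a genuine piece of the proof you have not planned for. Second, the inequality you would arrive at after Cramer's rule is $2|\langle f,z_1f\rangle|>\|z_1f\|^2+\langle z_1f,z_2f\rangle$, which is \emph{equivalent} to \eqref{inequality} under the isometry but is not literally the statement $S_1>S_2+S_3$ of Lemma \ref{lem2}; you cannot simply cite that lemma without first performing (some version of) the change of variables and the tail estimates that link the two. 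Neither point is a fatal gap, but both are real work that your outline currently elides.
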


This yields:

\begin{corollary}
The Weak Shanks Conjecture is false.
\end{corollary}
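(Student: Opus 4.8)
The plan is to compute $a$ and $b$ explicitly (or at least bound them sharply) by writing down the normal equations for the orthogonal projection of $1$ onto $\mathcal{P}_1 f$. Since $p_1 f$ is that projection, the coefficients $a,b$ of $p_1$ are determined by the condition that $1 - p_1 f$ is orthogonal in $H^2(\D^2)$ to $f$, $z_1 f$ and $z_2 f$; by the symmetry already exploited in \eqref{eqn104} the last two equations coincide, leaving the $2\times 2$ linear system
\[
\begin{cases}
a\,\langle f,f\rangle + b\,\langle (z_1+z_2) f, f\rangle = \langle 1,f\rangle,\\[2pt]
a\,\langle f, z_1 f\rangle + b\,\langle (z_1+z_2) f, z_1 f\rangle = \langle 1, z_1 f\rangle.
\end{cases}
\]
So the first task is to evaluate the six inner products appearing here. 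Using \eqref{eqn101}, the Taylor coefficients of $f$ are the multinomial-type coefficients coming from $(1 - (z_1+z_2)/\sqrt 6)^{-5/2}$, namely $a_{k,l} = \binom{k+l}{k}\,c_{k+l}\,6^{-(k+l)/2}$ where $c_m = \binom{-5/2}{m}(-1)^m$ is the $m$-th Taylor coefficient of $(1-x)^{-5/2}$. Then each inner product becomes an explicit (rapidly convergent) series in $m = k+l$; for instance $\langle 1, f\rangle = a_{0,0} = 1$, $\langle f, z_1 f\rangle = \sum a_{k,l} a_{k+1,l}$, and the combinatorial sums over fixed $m$ collapse via Vandermonde-type identities, leaving one-dimensional series in $m$ that can be summed in closed form (they are values of hypergeometric series at $1/6$, or can be recognized through the generating function $\sum_m c_m c_{m+j} x^m$). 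I expect this is exactly the reduction the authors carry out to arrive at Lemma \ref{lem2}.

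Once $\langle f,f\rangle$, $\langle z_1 f, f\rangle$, etc., are in hand, solving the $2\times 2$ system by Cramer's rule gives $a$ and $b$ as ratios of explicit expressions, and the claimed inequality $|a| < 2|b|$ becomes a single concrete numerical inequality among these closed-form quantities — this is the content of Lemma \ref{lem2}, which the excerpt says is "numerically obvious." The final step is then purely bookkeeping: substitute $p_1(z_1,z_2) = a + b(z_1+z_2)$, restrict to the diagonal $z_1 = z_2 = t \in [-1,1]$, observe $p_1(1,1) = a + 2b$ and $p_1(-1,-1) = a - 2b$ have opposite signs precisely when $|a| < 2|b|$, and invoke the intermediate value theorem to produce a zero at some diagonal point $(t_0,t_0)$ with $t_0 \in (-1,1)$, hence inside $\D^2$. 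This establishes the Claim, and then the Corollary follows by the continuity/approximation argument already spelled out before the Claim: the Taylor polynomials $f_N$ of $f$ converge to $f$ uniformly on $\overline{\D^2}$ (indeed on compacta of $\sqrt{3/2}\cdot\D^2$), are zero-free there for $N$ large since $f$ is, and their degree-$1$ OPAs converge to $p_1$, so for large $N$ the OPA of $1/f_N$ also has a zero in $\D^2$ — contradicting the Weak Shanks Conjecture.

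The main obstacle is the middle step: evaluating the inner products in closed form and then verifying the resulting inequality rigorously rather than just numerically. The combinatorial collapse over $m = k+l$ is clean, but the surviving series in $m$ are values of Gauss hypergeometric functions at $x = 1/6$, and proving a sharp inequality between such values — tight enough to separate $|a|$ from $2|b|$ with a genuine margin — typically requires either exact summation (if the ${}_2F_1$'s happen to be elementary at $1/6$, which the exponent $-5/2$ and argument $1/6$ suggest may be arranged, since $1 - 1/6 = 5/6$ and the algebraic nature of $(1-x)^{-5/2}$ can make the relevant generating functions rational or algebraic) or careful tail estimates to truncate the series with controlled error. I expect the authors isolate this as Lemma \ref{lem2} precisely because it is the one place where real analytic work is needed; everything on either side of it — setting up the normal equations, and deducing the diagonal sign change and the counterexample — is routine.
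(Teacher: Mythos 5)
Your skeleton is the paper's: symmetry forces $p_1=a+b(z_1+z_2)$ with $a,b$ real, the counterexample reduces to $|a|<2|b|$ via a sign change of $p_1$ on the diagonal, and the passage to Taylor polynomials of $f$ by continuity of OPAs is exactly the argument given before the Claim. Two remarks on the middle. First, you do not need Cramer's rule or all six inner products: since $\langle 1, z_1 f\rangle = 0$, your second normal equation alone gives the ratio $a/b$, and after the Chu--Vandermonde collapse (which is the paper's Lemma \ref{weights}, converting everything to the one-variable space $H^2_\omega$ with $\omega_k=\binom{2k}{k}2^{-2k}$) the condition $|a|<2|b|$ becomes precisely the two-term inequality $|\langle F,zF\rangle_\omega|>\|zF\|_\omega^2$ of \eqref{inequality}, where $F(z)=(1-z\sqrt{2/3})^{-5/2}$. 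So your reduction is correct but heavier than necessary.

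The genuine gap is that you never prove the decisive inequality. You offer two outs: that the relevant series might sum in closed form as elementary values of ${}_2F_1$ at $1/6$, or that "careful tail estimates" would do. The first does not materialize: the weights $\binom{2k}{k}2^{-2k}$ make the inner products $\sum_j a_j a_{j+1}\omega_{j+1}$ and $\sum_j a_j^2\omega_{j+1}$ into series with no evident closed form, which is why the paper does not attempt one. The second is where all the actual work lives, and "numerically obvious" is not a proof: the margin is about $1$ part in $40$ ($S_1\approx 42.07$ versus $S_2+S_3\approx 41.15$), so one must control the truncation error rigorously. The paper does this with the Stirling-type bounds $\omega_k\sqrt{\pi k}\in(7/8,1)$, the explicit ratio recurrences for $|a_{j+1}|^2/|a_j|^2$ and $\omega_{j+1}/\omega_j$, geometric tail bounds after $j=25$ (Proposition \ref{recurrence}), and an interval-arithmetic evaluation of the first $25$ terms (the table in Section \ref{prf2}). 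Without carrying out some version of that program, your argument establishes only that the conjecture's falsity would follow from an unverified numerical inequality.
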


\subsection{One variable spaces and zeros of OPAs}

Consider a sequence of positive weights $\omega:= \{\omega_k\}_{k=0}^{\infty}$ with $\omega_0 = 1,$
 $\lim_{k \rightarrow \infty} \frac{\omega_k}{\omega_{k+1}} = 1.$ 
The space $H_{\omega}^2$ consists of all analytic functions
$f\colon \D \rightarrow \C$ whose Taylor
coefficients in the expansion
\[f(z)=\sum_{k=0}^{\infty}a_kz^k, \quad z \in \D,\]
satisfy
\begin{equation*}
\|f\|^2_\omega=\sum_{k=0}^\infty |a_k|^2 \omega_k< \infty.
\end{equation*}
Given two functions $f(z) = \sum_{k=0}^{\infty}a_kz^k$ and $g(z) = \sum_{k=0}^{\infty}b_kz^k$ in $H^2_{\omega}(\D)$, we also have the associated inner product
\[ \langle f , g \rangle_{\omega} = \sum_{k=0}^\infty  a_k \overline{b_k} \omega_k . \]

\begin{definition}
Let $f \in H_{\omega}^2$. We say that a polynomial $p_n$ of degree at most $n$
is an \emph{optimal polynomial approximant (OPA)} of order $n$ to $1/f$ if $p_n$ minimizes
$\|p f-1\|_\omega$ among all polynomials $p$ of degree at most $n$.
\end{definition}
In other words, $p_{n}$ is an optimal polynomial approximant of order $n$ to $1/f$ if
\[  \|p_{n}f-1\|_{\omega}=\dist_{H_{\omega}^2}(1, f \cdot {\mathcal{P}}_n ), \]
where $ {\mathcal{P}}_n$ denotes the space of polynomials of degree at most $n.$ That is, $p_n f$ is the orthogonal projection of  $1$
onto the subspace $f\cdot {\mathcal{P}}_n$ and therefore, OPAs $p_n$ always exist and are unique for any nonzero function $f$, and any degree $n\geq 0$.
\begin{remark} Notice we already made use of the notation $\mathcal{P}_n$ for two-variable polynomials of algebraic degree at most $n$, but we will only look at such polynomials as restricted to the diagonal, where, if we identify $z=z_1=z_2$ they will coincide with 1-variable polynomials of the same degree. Thus we consider it an acceptable abuse of notation that will not lead to misunderstandings in the present study.\end{remark}

In \cite{BKLSS}, the authors studied the extremal problem 
\begin{align*}\label{iinf}
\inf_{n \in \N} \left\{|z|:p_n(z)=0,
\|p_nf-1\|_{\omega}=\min_{q\in\mcp_n}\|qf-1\|_{\omega},\,f\in
H^2_{\omega}\right\}.
\end{align*}
They noticed that in this one-variable setting, partly thanks to the Fundamental Theorem of Algebra, this extremal problem reduces to studying zeros of \emph{first order} (i.e., linear) polynomial approximants. Letting $p_1(z) = a+ bz$ be the first order approximant, noticing that $1-p_1f$ is orthogonal to $f$ and $zf$, solving the corresponding linear system for $a$ and $b$ and then setting $p_1(\zeta)= 0$ gives that  
the zero $\zeta$ of the first order optimal approximant
$p_1$ is given by \[\frac{\| z f \|^2_{\omega}}{ \langle f, zf \rangle_\omega }.
\]  
Thus, the extremal problem becomes to find
 \begin{equation}\label{extremal}
    \mcu:=\sup_{f \in H^2_{\omega}}  \frac{| \langle f, zf \rangle_\omega |}{\| z f \|^2_{\omega}}.
\end{equation} 
From now on, for a fixed weight $\omega$, we denote by $\mathcal{J}_\omega$ the Jacobi matrix with entries given by 
\begin{equation}\label{eqn201}
\left(\mathcal{J}_\omega\right)_{j,k} = 
\begin{cases}
\sqrt{\frac{\omega_j}{\omega_{j+1}}}, &\text{ if } k=j+1; \\

\sqrt{\frac{\omega_{j-1}}{\omega_{j}}},  &\text{ if } k=j-1;\\

0 &\text{ otherwise}.

\end{cases}
\end{equation}
The authors of \cite{BKLSS} were able to exploit the relationship between the extremal function, orthogonal polynomials on the real line, and norms of Jacobi matrices to prove the following theorem. 

\begin{theorem} Let $\omega= \{ \omega_k \}_{k=0}^{\infty}$ be such that $\omega_0 = 1,$ $\lim_{k \rightarrow \infty} \frac{\omega_k}{\omega_{k+1}} = 1.$ Then the following hold.
\begin{enumerate}
\item
\[ \sup_{f \in H_{\omega}^2}  \frac{| \langle f, zf \rangle_\omega |}{\| z f \|_{\omega}^2} = \frac{\|\mcj_{\omega}\|_{\ell^2 \rightarrow \ell^2}}{2}.\] 
\item If $\omega_k \leq \omega_{k+1},$ then $\|\mcj_{\omega}\|_{\ell^2 \rightarrow \ell^2} = 2$ and there is no solution to the extremal problem. 
\item If there exist $n,k \in \N$ such that $\omega_{k+n+1} < \frac{\omega_{k+1}}{4}$ (in particular, if the weights are strictly decreasing to $0$), then $\|\mcj_{\omega}\|_{\ell^2 \rightarrow \ell^2} > 2$, and the extremal problem has a solution.
\end{enumerate}
\end{theorem}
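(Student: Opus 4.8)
The plan is to reduce the extremal quantity $\mcu$ to the operator norm of $\mcj_\omega$ on $\ell^2$, and then to read off each of the three statements from the structure of $\mcj_\omega$. For part (1), note first that $\langle f,zf\rangle_\omega$ and $\|zf\|_\omega^2$ depend only on the Taylor coefficients of $f$: writing $f=\sum_{k\ge 0}a_kz^k$ gives $\|zf\|_\omega^2=\sum_{k\ge 0}|a_k|^2\omega_{k+1}$ and $\langle f,zf\rangle_\omega=\sum_{k\ge 1}a_k\overline{a_{k-1}}\,\omega_k$. Since $zf$ has no constant term I would pass to its coefficient sequence $b=(b_k)_{k\ge1}$ in the orthonormal monomial basis of $H^2_\omega$, i.e.\ $b_k=a_{k-1}\sqrt{\omega_k}$ (and index $\mcj_\omega$ by $\{1,2,\dots\}$ to match), so that $\|zf\|_\omega^2=\|b\|_{\ell^2}^2$ and a direct computation yields $\re\langle f,zf\rangle_\omega=\tfrac12\langle\mcj_\omega b,b\rangle_{\ell^2}$, the off-diagonal entries $\sqrt{\omega_j/\omega_{j+1}}$ of $\mcj_\omega$ being exactly the coefficients of the cross terms. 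The map $f\mapsto b$ is a bijection of $H^2_\omega$ onto $\ell^2$, and the supremum does not change under the gauge substitution $b_k\mapsto e^{i\theta_k}b_k$ (pick the $\theta_k$ recursively so every cross term $b_k\overline{b_{k-1}}$ becomes $\ge0$), which makes $\langle f,zf\rangle_\omega$ real and equal to $\tfrac12\langle\mcj_\omega|b|,|b|\rangle$. Hence $\mcu=\sup_{b\ge0}\tfrac12\langle\mcj_\omega b,b\rangle/\|b\|^2$, and since $\mcj_\omega$ is self-adjoint with nonnegative entries (so replacing $b$ by $(|b_k|)_k$ preserves $\|b\|$ and cannot decrease $\langle\mcj_\omega b,b\rangle$), this equals $\tfrac12\sup_b|\langle\mcj_\omega b,b\rangle|/\|b\|^2=\tfrac12\|\mcj_\omega\|_{\ell^2\to\ell^2}$. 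Because $\omega_k/\omega_{k+1}\to1$ the off-diagonal entries tend to $1$, so $\mcj_\omega$ is a compact perturbation of the free Jacobi matrix; thus $\sigma_{\mathrm{ess}}(\mcj_\omega)=[-2,2]$, whence $\|\mcj_\omega\|\ge 2$, and whenever $\|\mcj_\omega\|>2$ the number $\|\mcj_\omega\|$ is an isolated eigenvalue of $\mcj_\omega$ with an $\ell^2$ eigenvector.

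For part (2), suppose $\omega_k\le\omega_{k+1}$, so each off-diagonal entry $\eta_k:=\sqrt{\omega_k/\omega_{k+1}}$ is $\le1$. For $b\ge0$ one has the identity
\[
2\|b\|^2-\langle\mcj_\omega b,b\rangle\;=\;b_1^2+\sum_{k\ge1}\Bigl[(b_k-b_{k+1})^2+2(1-\eta_k)\,b_kb_{k+1}\Bigr],
\]
whose right-hand side is a sum of nonnegative terms; hence $\langle\mcj_\omega b,b\rangle\le 2\|b\|^2$, so $\|\mcj_\omega\|\le2$, and with $\|\mcj_\omega\|\ge 2$ we get $\|\mcj_\omega\|=2$. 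Equality in the identity forces $b=0$, so $\mcj_\omega$ has no eigenvector for eigenvalue $2$; but by part (1) a solution of the extremal problem would give (after the gauge rotation) a nonzero nonnegative $b$ with $\mcj_\omega b=2b$, so there is no such solution.

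For part (3), the hypothesis $\omega_{k+n+1}<\omega_{k+1}/4$ says precisely that $\eta_{k+1}\eta_{k+2}\cdots\eta_{k+n}=\sqrt{\omega_{k+1}/\omega_{k+n+1}}>2$, a block of $n$ consecutive off-diagonal entries of $\mcj_\omega$ with product exceeding $2$; if the weights strictly decrease to $0$ this holds for suitable $k,n$ since $\sqrt{\omega_{k+1}/\omega_{k+n+1}}\to\infty$. To conclude $\|\mcj_\omega\|>2$ I would use the orthogonal-polynomial dictionary: $\mcj_\omega$ is the Jacobi matrix of the orthonormal polynomials $(p_l)$ of a compactly supported probability measure $\mu$ on $\mathbb{R}$ (with infinite support), one has $\|\mcj_\omega\|=\max\{|x|:x\in\supp\mu\}$, and the monic orthogonal polynomials satisfy $\|P_l\|_{L^2(\mu)}=\eta_1\eta_2\cdots\eta_l$. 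If, for contradiction, $\|\mcj_\omega\|\le2$, i.e.\ $\supp\mu\subseteq[-2,2]$, let $\widetilde T_n(x)=2\cos\!\bigl(n\arccos(x/2)\bigr)$, the monic degree-$n$ polynomial with $\|\widetilde T_n\|_{L^\infty[-2,2]}=2$; then for every $m$ the polynomial $P_m\widetilde T_n$ is monic of degree $m+n$, so by the minimality of $P_{m+n}$,
\[
\eta_{m+1}\cdots\eta_{m+n}=\frac{\|P_{m+n}\|_{L^2(\mu)}}{\|P_m\|_{L^2(\mu)}}\le\frac{\|P_m\widetilde T_n\|_{L^2(\mu)}}{\|P_m\|_{L^2(\mu)}}\le\|\widetilde T_n\|_{L^\infty(\supp\mu)}\le 2,
\]
contradicting the block inequality with $m=k$. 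Hence $\|\mcj_\omega\|>2$, and by part (1) the (nonnegative, Perron--Frobenius) eigenvector of $\mcj_\omega$ for the eigenvalue $\|\mcj_\omega\|$ corresponds to an $f\in H^2_\omega$ realizing $\mcu=\|\mcj_\omega\|/2$, so the extremal problem has a solution.

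The step requiring the most thought is (3): assembling the orthogonal-polynomial dictionary — the identity $\|P_l\|_{L^2(\mu)}=\eta_1\cdots\eta_l$, the equality $\|\mcj_\omega\|=\max\supp\mu$, and the Chebyshev comparison $\|P_m\widetilde T_n\|_{L^2(\mu)}\le 2\,\|P_m\|_{L^2(\mu)}$ valid when $\supp\mu\subseteq[-2,2]$ — is the crux. The alternative, purely elementary route (test $\langle\mcj_\omega v,v\rangle>2\|v\|^2$ against a trial vector supported on a finite block where the off-diagonal product exceeds $2$) is possible in principle, but there the obstacle is to verify that the gain coming from the large product dominates the loss incurred at the two ends of the block.
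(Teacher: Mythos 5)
This theorem is imported into the paper from \cite{BKLSS} with no proof given here, so the only basis for comparison is the strategy the paper attributes to that reference: exploiting the relationship between the extremal quantity, norms of Jacobi matrices, and orthogonal polynomials on the real line — which is precisely the route you take. Your argument is correct and essentially the same: the reduction $\mcu=\|\mcj_\omega\|_{\ell^2\to\ell^2}/2$ via the substitution $b_k=a_{k-1}\sqrt{\omega_k}$ together with positivity of the matrix entries, the Weyl/compact-perturbation argument giving $\sigma_{\mathrm{ess}}(\mcj_\omega)=[-2,2]$, the telescoping identity in part (2) (which I verified), and the monic-minimality/Chebyshev comparison $\|P_{m+n}\|_{L^2(\mu)}\le \|P_m\widetilde{T}_n\|_{L^2(\mu)}\le 2\|P_m\|_{L^2(\mu)}$ in part (3) all check out.
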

For certain particular cases of weights, they were able to find the extremal functions explicitly. This is the case when the weight is defining so-called \emph{Bergman-type spaces}, $A^2_\beta$ for $\beta > -1$. These are $H^2_\omega$ spaces with the choice of weights \[\omega_k := {k + 1 + \beta \choose k}^{-1}.\] For these weights, the (square of the) norm can also be written in its more standard integral expression
\[\|f\|^2_{A^2_\beta} = (\beta +1) \int_\D |f(z)|^2 (1-|z|^2)^\beta dA(z).\]

For a fixed weight $\omega$, extremal functions for \eqref{extremal} are unique up to a rotation of the variable and multiplication by a constant, so we restrict ourselves to  \emph{the} extremal function that has real coefficients and is normalized so that $f(0)=1$. A key finding in \cite{BKLSS} is Theorem 5.1, which includes the following description of the solution to \eqref{extremal}.
\begin{theorem}\label{A2beta}
The extremal function for \eqref{extremal} in $A^2_\beta$ is
\[f_\beta (z) := (1-z/c)^{-d},\]
where $c= \sqrt{\beta+2}$ and $d=\beta+3$.
\end{theorem}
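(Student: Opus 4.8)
The plan is to recast the extremal problem \eqref{extremal} in terms of Taylor coefficients and then establish a sharp weighted arithmetic--geometric mean inequality, the auxiliary weights being chosen so that $f_\beta$ is exactly its equality case. Write a general $f \in A^2_\beta = H^2_\omega$, with weights $\omega_k = \binom{k+1+\beta}{k}^{-1}$, as $f(z) = \sum_{k\ge 0} a_k z^k$. Since $\{z^k\}$ is an orthogonal basis of $H^2_\omega$,
\[
\langle f, zf\rangle_\omega = \sum_{k\ge 1} a_k\overline{a_{k-1}}\,\omega_k,
\qquad
\|zf\|_\omega^2 = \sum_{k\ge 0} |a_k|^2\,\omega_{k+1},
\]
and, since $\omega_{k+1}\le\omega_k$ for these weights, these series (and the ones below) converge absolutely whenever $f \in H^2_\omega$, so the rearrangements used later are legitimate. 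I would then record the two facts needed: the weight ratio $\omega_k/\omega_{k+1} = (k+2+\beta)/(k+1)$, read off from the definition of $\omega_k$; and the Taylor coefficients of the candidate $f_\beta(z) = (1 - z/c)^{-d}$, namely $a_k = \binom{\beta+2+k}{k}\,c^{-k}$ with $c = \sqrt{\beta+2}$ and $d = \beta+3$, for which $a_k/a_{k-1} = (\beta+2+k)/(kc)$. Observe that $c > 1$ for $\beta > -1$, so $f_\beta$ is bounded on $\overline{\D}$ and hence belongs to $A^2_\beta$.

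Next I would prove the upper bound $\mcu \le \lambda_0$, where $\lambda_0 := \tfrac{\beta+3}{2\sqrt{\beta+2}} = \tfrac{d}{2c}$. Introduce the positive numbers $t_k := \tfrac{kc}{\beta+2+k}\,\omega_k$ for $k\ge 1$. A direct computation, which in each instance collapses to the polynomial identity $kc^2 + (\beta+3+k) = (k+1)(\beta+3)$ (i.e.\ to $c^2 = \beta+2$), verifies
\[
t_1 = \frac{\omega_1}{2\lambda_0}
\qquad\text{and}\qquad
t_k + \frac{\omega_{k+1}^2}{t_{k+1}} = 2\lambda_0\,\omega_{k+1}\quad (k\ge 1).
\]
Now, for any $f = \sum a_k z^k \in H^2_\omega$, use $2|a_k|\,|a_{k-1}|\,\omega_k \le t_k|a_k|^2 + \tfrac{\omega_k^2}{t_k}|a_{k-1}|^2$ for each $k\ge 1$, sum over $k\ge 1$, re-index the resulting second sum by $j=k-1$, and substitute the two displayed identities; this yields
\[
|\langle f, zf\rangle_\omega| \;\le\; \sum_{k\ge 1}|a_k|\,|a_{k-1}|\,\omega_k \;\le\; \lambda_0\sum_{k\ge 0}\omega_{k+1}|a_k|^2 \;=\; \lambda_0\,\|zf\|_\omega^2,
\]
and hence $\mcu \le \lambda_0$.

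Finally I would check that $f_\beta$ attains $\lambda_0$. Its coefficients are positive, so $\langle f_\beta, zf_\beta\rangle_\omega = \sum_{k\ge 1}|a_k|\,|a_{k-1}|\,\omega_k$ (no loss in the triangle inequality); and since $|a_k|/|a_{k-1}| = (\beta+2+k)/(kc) = \omega_k/t_k$, the arithmetic--geometric mean step above is an equality for every $k\ge 1$. Therefore $\langle f_\beta, zf_\beta\rangle_\omega / \|zf_\beta\|_\omega^2 = \lambda_0 = \mcu$, so $f_\beta$ is an extremal function; having positive Taylor coefficients and $f_\beta(0)=1$, it is the normalized extremal function in the sense fixed above (and tracking the equality cases of the two inequalities re-derives the uniqueness of the extremal up to a rotation of the variable and a multiplicative constant). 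The only step carrying real content is guessing the auxiliary weights $t_k$; after that the argument is bookkeeping with ratios of Gamma functions that simplifies precisely because $c^2 = \beta+2$ and $d = \beta+3$. One alternative would be to derive the Euler--Lagrange recurrence $a_k + \tfrac{\omega_{k-1}}{\omega_k}\,a_{k-2} = 2\lambda\,a_{k-1}$ satisfied by the coefficients of any extremal, verify that $f_\beta$ solves it with $2\lambda = d/c$, and then use a Perron--Frobenius-type argument for the Jacobi matrix $\mcj_\omega$ to conclude that a positive eigenvector must correspond to the top of its spectrum $\|\mcj_\omega\| = 2\mcu$; but the weighted-mean route above is shorter and invokes from the preceding material only the uniqueness statement for the extremal function.
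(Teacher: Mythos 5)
Your proposal is correct. Note first that the paper does not prove this statement at all: it imports it from \cite{BKLSS} (Theorem 5.1 there), where the argument runs through the identification $\mcu=\tfrac12\|\mcj_\omega\|_{\ell^2\to\ell^2}$ and the machinery of Jacobi matrices and orthogonal polynomials on the real line. Your route is genuinely different and self-contained: the telescoping weighted AM--GM with auxiliary weights $t_k=\tfrac{kc}{\beta+2+k}\,\omega_k$ is engineered so that $f_\beta$ is the exact equality case, and the two identities you display do check out (both reduce, as you say, to $kc^2+(\beta+3+k)=(k+1)(\beta+3)$, i.e.\ to $c^2=\beta+2$; I verified $t_k=\tfrac{kc}{k+1}\omega_{k+1}$ and $\omega_{k+1}^2/t_{k+1}=\tfrac{\beta+3+k}{(k+1)c}\omega_{k+1}$, whose sum is $\tfrac{d}{c}\omega_{k+1}=2\lambda_0\omega_{k+1}$). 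The boundary term $\omega_1^2/t_1=2\lambda_0\omega_1$ is also right, the convergence/rearrangement caveat is handled by the monotonicity $\omega_{k+1}\le\omega_k$, and the equality analysis (same-argument condition from the triangle inequality plus the forced modulus ratios, with the observation that a vanishing coefficient propagates to $f\equiv 0$) recovers uniqueness up to rotation and scalar, so the normalization $f(0)=1$ with positive coefficients pins down $f_\beta$. What each approach buys: the \cite{BKLSS} route yields the general structural theorem for arbitrary admissible weights (existence or nonexistence of extremals, the spectral interpretation), whereas your argument is shorter and elementary for this specific family and, as a byproduct via part (1) of the quoted theorem, independently computes $\|\mcj_\omega\|_{\ell^2\to\ell^2}=(\beta+3)/\sqrt{\beta+2}$ for these weights. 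Your closing remark correctly identifies the Euler--Lagrange/Jacobi-matrix alternative as the one closer in spirit to the cited source.
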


\begin{remark}
Later on we will make use of a function $F$ that will be the function $f_{-1/2}$ in the notation of the above theorem. This will be useful because, in the next Section, we will show how $H^2(\D^2)$ contains a copy of $A^2_{-1/2}$ but equipped with an \emph{equivalent norm}. This suggests to look for the desired example through the lens of known examples in one variable. 
\end{remark}

\section{The core of the solution to the conjecture}\label{prf1}

In order to find a counterexample to the Weak Shanks Conjecture, let us look for a function of the form $f(z_1,z_2) = F \left( \frac{z_1+z_2}{2} \right),$ where $F \in H^2_{\omega}$ for some suitable weights. If $(z_1,z_2) \in \D^2$ then $\frac{z_1+z_2}{2} \in \D$. 
We would like to use this relationship to find a function $f(z_1,z_2)$ that is analytic in the closed bidisk and that has an OPA vanishing in $\D^2$. The needed weights are given by the following lemma. 

\begin{lemma}\label{weights}
Let $\omega_0=1$ and for $k \geq 1,$ let $\omega_k = \frac{{2k \choose k}}{2^{2k}}.$  Let $F \in H^2_{\omega}$.  If $f(z_1,z_2) = F \left( \frac{z_1+z_2}{2} \right),$ then $f \in H^2(\D^2)$ and 
\begin{equation*}
\langle f, f \rangle_{H^2(\D^2)}  = \langle F, F \rangle_{H^2_{\omega}}. 
\end{equation*}

\begin{proof}  Let $F(z) = \sum_{k=0}^{\infty} a_k z^k.$   
Note first that from the definition of the norm as an inner product, different powers of $(z_1+z_2)$ are mutually orthogonal in $H^2(\D^2)$ since they are linear combinations of monomials of different degrees. Thus we have right away that
\[\langle f, f \rangle_{H^2(\D^2)}  = \sum_{k=0}^\infty |a_k|^2 \left\langle  \left( \frac{z_1 +z_2}{2} \right)^k, \left( \frac{z_1 +z_2}{2} \right)^k \right\rangle_{H^2(\D^2)}.\]

Removing the constant terms on $2^{-k}$ from both sides, the inner product inside the sum of the right-hand side is equal to
\[2^{-2k} \left\langle  \left( \sum_{j=0}^k {k \choose j} z_1^jz_2^{k-j} \right), \left( \sum_{j=0}^k {k \choose j}  z_1^jz_2^{k-j} \right)\right\rangle_{H^2(\D^2)}.\]
For $j=0,...,k$, the monomials $z_1^j z_2^{k-j}$ are an orthonormal system, and hence
\[\langle f, f \rangle_{H^2(\D^2)}  = \sum_{k=0}^\infty 2^{-2k} |a_k|^2 \sum_{j=0}^k {k \choose j}^2.\]
The classical Chu-Vandermonde identity \cite{As} yields $\sum_{j=0}^k {k \choose j}^2 = {2k \choose k}$ and the definition of our weights, $\omega_k = \frac{{2k \choose k}}{2^{2k}}$ for $k \in \N$, completes the proof of the claimed identity.
From the above, it is clear that $F \in H^2_{\omega}$ if and only if $f \in H^2(\D^2),$ and the lemma is proved.
\end{proof}

\end{lemma}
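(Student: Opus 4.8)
The plan is to compute $\|f\|_{H^2(\D^2)}^2$ directly from the Maclaurin expansion of $F$ and match it, coefficient by coefficient, against $\|F\|_{\omega}^2$. Write $F(z) = \sum_{k=0}^{\infty} a_k z^k$. Substituting $z = (z_1+z_2)/2$ and expanding by the binomial theorem gives, at least formally,
\[
f(z_1,z_2) = \sum_{k=0}^{\infty} \frac{a_k}{2^k} \sum_{j=0}^{k} \binom{k}{j} z_1^j z_2^{k-j}.
\]
The structural point I would use is that the monomials $\{z_1^j z_2^l\}_{j,l\geq 0}$ form an orthonormal basis of $H^2(\D^2)$, so any two homogeneous polynomials of distinct total degree are orthogonal; in particular the blocks $(z_1+z_2)^k$, $k=0,1,2,\dots$, are mutually orthogonal, and the squared norm of $f$ splits as a sum over $k$ of the squared norms of these blocks.

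Next I would compute the $k$-th block: by orthonormality of the $k+1$ distinct monomials $z_1^j z_2^{k-j}$,
\[
\left\| \frac{a_k}{2^k} \sum_{j=0}^{k} \binom{k}{j} z_1^j z_2^{k-j} \right\|_{H^2(\D^2)}^2 = \frac{|a_k|^2}{2^{2k}} \sum_{j=0}^{k} \binom{k}{j}^2 .
\]
The Chu-Vandermonde identity, in the form $\sum_{j=0}^{k} \binom{k}{j}^2 = \binom{2k}{k}$, converts the right-hand side into $|a_k|^2 \, \binom{2k}{k} 2^{-2k} = |a_k|^2 \omega_k$ for $k \geq 1$, while the $k=0$ term is just $|a_0|^2 = |a_0|^2 \omega_0$. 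Summing over $k$ yields $\langle f,f\rangle_{H^2(\D^2)} = \langle F,F\rangle_{H^2_{\omega}}$, and since both series have nonnegative terms that coincide termwise, one is finite precisely when the other is; hence $f \in H^2(\D^2)$ if and only if $F \in H^2_{\omega}$.

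I do not expect a genuine obstacle here: once the orthogonality of distinct-degree homogeneous parts is noticed, the whole statement is a bookkeeping of monomial coefficients together with one classical binomial identity. The only thing to handle with a little care is the interchange of the infinite sum with the $H^2(\D^2)$ inner product; I would do this by running the argument first for the partial sums $F_N = \sum_{k=0}^{N} a_k z^k$, where every identity is exact and finite, and then passing to the limit using the termwise nonnegativity just mentioned (which also shows the partial sums of the series defining $f$ are Cauchy in $H^2(\D^2)$, so the formal expansion above is legitimate).
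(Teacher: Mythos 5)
Your proposal is correct and follows essentially the same route as the paper: orthogonality of the homogeneous blocks $(z_1+z_2)^k$, orthonormality of the monomials $z_1^j z_2^{k-j}$, and the Chu--Vandermonde identity $\sum_{j=0}^{k}\binom{k}{j}^2=\binom{2k}{k}$. Your extra care with partial sums to justify the interchange of summation and inner product is a small refinement the paper leaves implicit, but it does not change the argument.
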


Since the norms of these Hilbert spaces coincide, then the corresponding inner products also coincide, and therefore the two-variable extremal problem is the same as \eqref{extremal}.

By Stirling's Formula, one can show that the weights $\omega_k$ are comparable to 
$\frac{1}{\sqrt{k+1}},$ (in fact we will prove below a more precise statement), which implies that the space $H^2_{\omega}$  is equal to $A^2_{-1/2}$ as a set  for this collection of weights but has a different (yet equivalent) norm. By Theorem \ref{A2beta}, the function $F(z) = \left( 1-\frac{z}{\sqrt{\frac{3}{2}}}\right)^{-5/2}$ is extremal for the minimal zero problem in $A^2_{-1/2}$ and has a first degree OPA vanishing in $\D.$  However, since the norms are equivalent but not equal, we do not have a closed form for the actual extremal function in $H^2_{\omega}$, which means that although we know its OPAs will vanish inside the bidisk, we do not know that it is zero free itself. Thus although it is not immediate that this function will produce a counterexample to the Weak Shanks Conjecture, it seems like a good guess. 

 Therefore, let us consider the function $f$ defined in the Introduction,
$$f(z_1,z_2) = \left( 1 - \frac{\frac{z_1+z_2}{2}}{\sqrt{3/2}} \right)^{-5/2} = \left( 1 - \frac{z_1+z_2}{\sqrt{6}} \right)^{-5/2}.$$
We also use from now on the rest of the notation from the introduction regarding $p_1$, $a$, $b$ and $F$.

In order to check Lemma \ref{lem1} it then suffices to check that 
\begin{equation}\label{inequality}
| \langle F, zF \rangle_{\omega} | > \|zF\|^2_{\omega}.
\end{equation}

The proof of this inequality is a series of computations and estimates that involve: 
\begin{enumerate}
\item[(1)] A close examination of the weights $\omega_k$;
\item[(2)] Detailed estimates of the coefficients $a_k$ of the function $F$ that allow us to see how far we have to go in replacing $F$ by its Taylor polynomial to get an accurate enough estimate to ensure \eqref{inequality}; 
\item[(3)] Numerical verification of \eqref{inequality} for that Taylor polynomial. 
\end{enumerate}
Let us begin with the following lemma in order to address item (1) above. 

\begin{lemma}\label{weight_estimate}
For $k > 0,$ the product $\omega_k \cdot \sqrt{\pi k } \in (7/8,1)$, and 
$$\lim_{k \rightarrow \infty} \omega_k \cdot \sqrt{\pi k } =1.$$
\end{lemma}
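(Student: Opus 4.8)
The plan is to introduce the auxiliary sequence $c_k := \omega_k\sqrt{\pi k}$ and to prove that $(c_k)_{k\geq 1}$ is strictly increasing with limit $1$; both assertions of the lemma then follow at once, the lower bound from $c_k \geq c_1$ and the upper bound from $c_k < \lim_j c_j = 1$. I would start from the closed form $\omega_k = \binom{2k}{k}4^{-k} = \prod_{j=1}^{k}\frac{2j-1}{2j}$ and the elementary recursion $\omega_{k+1}/\omega_k = \frac{2k+1}{2k+2}$, which is immediate from $\binom{2k+2}{k+1} = \binom{2k}{k}\cdot\frac{2(2k+1)}{k+1}$. Then
\[
\frac{c_{k+1}^2}{c_k^2} = \frac{k+1}{k}\left(\frac{2k+1}{2k+2}\right)^2 = \frac{(2k+1)^2}{4k(k+1)} = 1 + \frac{1}{4k(k+1)} > 1,
\]
so $c_{k+1} > c_k$ for every $k \geq 1$, whence $c_k \geq c_1 = \frac{\sqrt{\pi}}{2}$. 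Since $\pi > \frac{49}{16}$ this gives $c_1 > \frac{7}{8}$, which settles the lower bound (and $c_1 < 1$ already follows from $\pi < 4$).

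For the limit I would invoke the Wallis product $W_k := \prod_{j=1}^{k}\frac{(2j)^2}{(2j-1)(2j+1)} \uparrow \frac{\pi}{2}$. A short manipulation of double factorials shows $\omega_k^2\, W_k = \frac{1}{2k+1}$, hence $c_k^2 = \frac{\pi k}{(2k+1)\,W_k} \to \frac{\pi/2}{\pi/2} = 1$. (Equivalently, one may simply quote the standard asymptotic $\binom{2k}{k}\sim 4^k/\sqrt{\pi k}$, a textbook consequence of Stirling's formula.) Combining this with the strict monotonicity established above yields $c_k < 1$ for all $k \geq 1$ together with $c_k \to 1$, which is everything claimed.

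There is no serious obstacle in this argument: it reduces to a one-line ratio computation for monotonicity plus a classical central-binomial asymptotic. The only point that warrants a moment's care is that $7/8$ is uncomfortably close to $c_1 = \sqrt{\pi}/2 \approx 0.886$, so the lower bound genuinely depends on reducing all cases to $k=1$ by monotonicity; a cruder uniform Stirling-type lower bound would have to be quite sharp already at $k=1$ in order to clear $7/8$, and establishing monotonicity is the cleaner route around this.
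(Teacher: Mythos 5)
Your proof is correct, and it takes a genuinely different route from the paper. The paper proves the lemma by applying Robbins' sharp two-sided refinement of Stirling's formula, $\sqrt{2\pi k}\,(k/e)^k e^{1/(12k+1)} < k! < \sqrt{2\pi k}\,(k/e)^k e^{1/(12k)}$, to $\omega_k = 2^{-2k}(2k)!/(k!)^2$, obtaining $e^{1/(24k+1)-1/(6k)}/\sqrt{\pi k} < \omega_k < e^{1/(24k)-1/(6k+1/2)}/\sqrt{\pi k}$, and then checking that the exponent in the lower bound is increasing in $k$ and already exceeds $\log(7/8)$ at $k=1$ (via $e^{1/25-1/6}>0.88$). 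As you anticipate in your closing remark, this uniform Stirling-type bound must indeed be sharp at $k=1$ to clear $7/8$, which is exactly why the paper needs Robbins' error terms rather than the crude Stirling asymptotic. Your approach sidesteps this entirely: the ratio computation $c_{k+1}^2/c_k^2 = 1 + \tfrac{1}{4k(k+1)}$ gives strict monotonicity, reducing the lower bound to the exact value $c_1=\sqrt{\pi}/2>7/8$, and the telescoping identity $\omega_k^2 W_k = \tfrac{1}{2k+1}$ with the Wallis product handles the limit and, via monotonicity, the upper bound $c_k<1$. Your argument is more elementary and self-contained (modulo quoting Wallis, which is itself usually derived from the same circle of ideas as Stirling), and it additionally yields the monotonicity of $\omega_k\sqrt{\pi k}$, which the paper's estimates do not give; the paper's approach, on the other hand, produces explicit two-sided bounds of the form $e^{\varepsilon_k}/\sqrt{\pi k}$ that quantify the rate of convergence. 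All the computations you present check out: $\omega_{k+1}/\omega_k=(2k+1)/(2k+2)$ agrees with the paper's equation \eqref{eqn301}, $(2k+1)^2 = 4k(k+1)+1$ justifies the monotonicity, $\pi>49/16$ gives $c_1>7/8$, and $\omega_k^2 W_k=\prod_{j=1}^k\tfrac{2j-1}{2j+1}=\tfrac{1}{2k+1}$ telescopes as claimed.
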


\begin{proof}
A well-known estimate based on Stirling's Formula for the factorial \cite{HRob} gives that for $k \in \N \backslash \{ 0 \},$ 
\begin{equation*}
	\sqrt{2 \pi k} \left( \frac{k}{e} \right)^k e^{\frac{1}{12k+1}} < k! < \sqrt{2 \pi k} \left( \frac{k}{e} \right)^k e^{\frac{1}{12k}}.
\end{equation*}
Applying these estimates to $\omega_k = \frac{{2k \choose k}}{2^{2k}} = \frac{1}{2^{2k}} \cdot \frac{(2k)!}{(k!)^2},$ we deduce that for $k \geq 1,$
\begin{equation*}
\omega_k > 2^{-2k} \cdot \left(\sqrt{2 \pi 2k}\left (\frac{2k}{e}\right)^{2k} e^{\frac{1}{24k+1}} \right) \cdot \left(\frac{(\frac{e}{k})^{2k} e^{\frac{-1}{6k}}}{2\pi k} \right) = \frac{e^{\frac{1}{24k+1}-\frac{1}{6k}}}{\sqrt{\pi k}}.\end{equation*}
Notice that $1/(24k+1) - 1/6k$ is a increasing function of $k \geq 1$ and that $e^{1/25-1/6} > 0.88 > 7/8$ which gives
\begin{equation*} \omega_k > \frac{7}{8 \sqrt{\pi k}}.
\end{equation*}

From the other side, we obtain
\begin{equation*}
\omega_k < \frac{e^{\frac{1}{24k}-\frac{1}{6k+1/2}}}{\sqrt{\pi k}} < \frac{1}{\sqrt{\pi k}}. 
%\frac{e^{-\frac{1}{10k}}}{\sqrt{\pi k}} < \frac{1}{\sqrt{\pi k}}.
\end{equation*}
Using the more precise inequalities above for large $k$ gives that $$\lim_{k \rightarrow \infty} \omega_k \cdot \sqrt{\pi k } =1$$ and using the outermost inequalities above give that $\omega_k \cdot \sqrt{\pi k } \in (7/8,1)$, as desired. 
\end{proof}

Next, we find the detailed estimates of the coefficients to allow for a specific truncation of the power series of $F$ to suffice. It turns out that computing the inner products in \eqref{inequality} using only coefficients with indices $j=0,...,25$ of $F$ will yield enough accuracy. For the rest of the article, we let
\[F(z)  = \sum_{j=0}^{\infty} a_jz^j = \left( 1- \frac{z}{\sqrt{3/2}} \right)^{-5/2}.\]
The quantity \eqref{extremal} that we want to evaluate for $F$ will be computed from the 3 quantities $S_1$, $S_2$ and $S_3$, where
\begin{align*}
S_1 &:= \sqrt{\frac{2}{3}}\left( \sum_{j=0}^{24} \left(1 + \frac{3}{2(j+1)}\right) |a_j|^2 \omega_{j+1} \right),\\ 
S_2 &:= \sum_{j=0}^{24} |a_j|^2 \omega_{j+1},\\
S_3 &:= (5-\sqrt{6}) |a_{25}|^2 \omega_{26}.\end{align*}
Now we are ready to formulate a key step of the proof:
\begin{proposition}
The function $F$  satisfies
\begin{equation}\label{recurrence}
a_j = \prod_{t=1}^{j} \left[\sqrt{\frac{2}{3}} \left( 1 + \frac{3}{2t} \right) \right].
\end{equation} 
Moreover, 
\begin{equation}\label{eqn501}| \langle F, zF \rangle_{\omega}| > S_1+  \sqrt{6} |a_{25}|^2 \omega_{26}\end{equation}

while 

\begin{equation}\label{eqn502}\|zF\|_{\omega}^2 < S_2 + 5 |a_{25}|^2 \omega_{26}.\end{equation}
\end{proposition}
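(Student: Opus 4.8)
The plan is to prove the three assertions in the order they are stated, starting from the explicit form of $F$ and then feeding the coefficient formula into crude but sufficient tail estimates for the two inner products.

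\textbf{Step 1: the recurrence for the coefficients.} Since $F(z) = (1-z/\sqrt{3/2})^{-5/2}$, the generalized binomial theorem gives $a_j = \binom{-5/2}{j}(-1/\sqrt{3/2})^j$. Rather than manipulate binomial coefficients directly, I would differentiate: $F$ solves the linear ODE $(1 - z\sqrt{2/3})F'(z) = \tfrac{5}{2}\sqrt{2/3}\,F(z)$. Comparing Taylor coefficients of both sides yields $(j+1)a_{j+1} = \sqrt{2/3}\,(j+1)a_j \cdot$ --- more precisely, matching the coefficient of $z^j$ gives $(j+1)a_{j+1} - \sqrt{2/3}\, j a_j = \tfrac{5}{2}\sqrt{2/3}\, a_j$, i.e. $a_{j+1} = \sqrt{2/3}\,\bigl(1 + \tfrac{3}{2(j+1)}\bigr)a_j$. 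Since $a_0 = 1$, iterating gives \eqref{recurrence}. (One checks the indexing matches: the product runs $t=1,\dots,j$, and the step from $a_{j-1}$ to $a_j$ contributes the factor with $t=j$.)

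\textbf{Step 2: reduce the inner products to weighted sums and isolate a single tail term.} By definition $\langle F, zF\rangle_\omega = \sum_{j=0}^\infty a_j a_{j+1}\,\omega_{j+1}$ (all coefficients real and positive) and $\|zF\|_\omega^2 = \sum_{j=0}^\infty a_j^2\,\omega_{j+1}$. Using the recurrence to write $a_{j+1} = \sqrt{2/3}\,(1 + \tfrac{3}{2(j+1)})\,a_j$, the first sum becomes $\sqrt{2/3}\sum_{j=0}^\infty (1 + \tfrac{3}{2(j+1)})\,a_j^2\,\omega_{j+1}$, whose partial sum over $j=0,\dots,24$ is exactly $S_1$, and whose partial sum over $j=0,\dots,24$ for the second is $S_2$. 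So it remains to handle the tails $j\geq 25$. The key trick is that both tails are dominated by, or dominate, a multiple of their first term $a_{25}^2\,\omega_{26}$ because the ratio of consecutive tail terms is controlled: $\dfrac{a_{j+1}^2\,\omega_{j+2}}{a_j^2\,\omega_{j+1}} = \tfrac{2}{3}\bigl(1 + \tfrac{3}{2(j+1)}\bigr)^2\dfrac{\omega_{j+2}}{\omega_{j+1}}$, and $\omega_{j+2}/\omega_{j+1} = \tfrac{2j+3}{2j+4} < 1$. For $j\geq 25$ the factor $\tfrac{2}{3}(1+\tfrac{3}{2(j+1)})^2\tfrac{2j+3}{2j+4}$ is decreasing in $j$ and at $j=25$ equals some explicit number $r < 1$ (roughly $0.70$); hence $\sum_{j\geq 25} a_j^2\,\omega_{j+1} \leq \tfrac{1}{1-r}\,a_{25}^2\,\omega_{26}$, and one verifies $\tfrac{1}{1-r} < 5$, giving \eqref{eqn502}. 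For \eqref{eqn501} one needs a lower bound: $\langle F,zF\rangle_\omega \geq S_1 + \sqrt{2/3}\,(1+\tfrac{3}{52})\,a_{25}^2\,\omega_{26}$ just from keeping the single $j=25$ term of the tail (all terms are positive), and $\sqrt{2/3}\,(1+\tfrac{3}{52}) > \sqrt{6}\cdot\tfrac{1}{3}$... wait, rather: note $\sqrt{6}|a_{25}|^2\omega_{26}$ is exactly $\sqrt{2/3}\cdot 3\cdot$ --- the cleaner route is to observe $\langle F, zF\rangle_\omega = \sqrt{2/3}\sum_{j\geq 0}(1+\tfrac{3}{2(j+1)})a_j^2\omega_{j+1} > S_1 + \sqrt{2/3}\sum_{j\geq 25} a_j^2\omega_{j+1}$, and then bound $\sqrt{2/3}\sum_{j\geq 25}a_j^2\omega_{j+1}$ from below by $\sqrt{2/3}\, a_{25}^2\omega_{26}$; since $\sqrt{2/3} > \sqrt{6}\cdot$ is false, I instead keep enough tail terms: the tail sum $\sum_{j\geq25}a_j^2\omega_{j+1}$ is a geometric-like series with ratio near $0.7$, so it exceeds $a_{25}^2\omega_{26}\cdot(1 + 0.7 + \cdots)$; multiplying by $\sqrt{2/3}\approx 0.816$ and comparing with $\sqrt{6}\approx 2.449$ shows $\sqrt{2/3}\cdot\tfrac{1}{1-r'} > \sqrt6$ with $r'$ a valid lower bound for the ratio (e.g. $r' = \tfrac23(1+\tfrac{3}{52})^2\cdot\tfrac{51}{52}$ fails to be a uniform lower bound since the ratio decreases, so one uses a two-term bound: $1 + r' $ with $r'$ the ratio at $j=26$, etc.). The exact bookkeeping of which finite truncation of the tail geometric series is needed is the one genuinely fiddly point, but it is elementary.

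\textbf{Main obstacle.} The only real work is the tail estimate: getting a clean upper bound $< 5$ and lower bound $> \sqrt6$ for the appropriately normalized tail sums $\sum_{j\geq 25}a_j^2\omega_{j+1}\big/(a_{25}^2\omega_{26})$. This requires (i) the monotonicity of the consecutive-ratio function $\tfrac23(1+\tfrac{3}{2(j+1)})^2\tfrac{2j+3}{2j+4}$ for $j\geq 25$, which is a routine calculus check, and (ii) comparison of the resulting geometric (or slightly super/sub-geometric) series against the constants $5$ and $\sqrt6$; Lemma \ref{weight_estimate} is not even needed here since the ratio $\omega_{j+2}/\omega_{j+1}=\tfrac{2j+3}{2j+4}$ is exact. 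The recurrence \eqref{recurrence} and the identification of $S_1, S_2$ with the partial sums are bookkeeping. I would organize the write-up as: (a) derive \eqref{recurrence} via the ODE; (b) rewrite both inner products as weighted sums and split off the first $25$ terms; (c) establish the ratio bound and monotonicity; (d) sum the geometric tails and compare with $5$ and $\sqrt6$.
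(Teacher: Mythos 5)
Your proposal follows essentially the same route as the paper: derive the coefficient recurrence (you via the ODE $(1-z\sqrt{2/3})F'=\tfrac52\sqrt{2/3}\,F$, the paper directly from the binomial series), rewrite both quantities as weighted sums of $a_j^2\omega_{j+1}$ split at $j=25$ so that the initial blocks are exactly $S_1$ and $S_2$, and control the tails by geometric comparison via the consecutive-term ratio $\tfrac23\bigl(1+\tfrac{3}{2(j+1)}\bigr)^2\tfrac{2j+3}{2j+4}$. The one point you leave hanging --- the lower bound needs the normalized tail to exceed $3$ while the ratio only tends to $2/3$ from above --- closes cleanly: that ratio is strictly greater than $2/3$ for every $j$, so $\sum_{j\ge 25}a_j^2\omega_{j+1}>a_{25}^2\omega_{26}\sum_{t\ge 0}(2/3)^t=3\,a_{25}^2\omega_{26}$ termwise, and the discarded factors $1+\tfrac{3}{2(j+1)}>1$ (which the paper retains to get over the threshold) only add slack, yielding \eqref{eqn501}.
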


\begin{proof}
Notice that, from the definition of binomial coefficients,
\begin{equation}\label{aj}
a_j = \left( -\sqrt{2/3} \right)^j {-5/2 \choose j } = \prod_{t=1}^{j} \left[
\sqrt{\frac{2}{3}} \left( 1 + \frac{3}{2t} \right) \right].
\end{equation} 

From \eqref{aj}, it is clear that for $j \geq 24,$ 
\begin{equation}\label{aj_ineq}
\frac{|a_{j+1}|^2}{|a_j|^2} = (2/3) \cdot \left(1+ \frac{3}{2(j+1)}\right)^2 \leq (2/3) \cdot \left(\frac{53}{50}\right)^2 < \frac{3}{4},
\end{equation}
where the last estimate holds because \[(53/50)^2=(1.06)^2=1.1236<1.125 =9/8.\]
We can also deduce from \eqref{aj} that for all $j \in \N$ 
\[|a_{j+1}|^2 \geq \frac{2}{3} |a_j|^2.\]
However, we will be a bit more ambitious and use that for $j \geq 25$ we have
\[|a_{j+1}|^2 \omega_{j+1} \geq |a_j|^2 \omega_j C,\]
where $C$ is given by 
\[C= \frac{2}{3} \inf_{j \geq 25} \frac{\omega_{j+1}}{\omega_j}.\]
To obtain $C$ notice that
\begin{equation}\label{eqn301}
\frac{\omega_{j+1}}{\omega_j} = \frac{2^{2j}{2j+2  \choose j+1}}{2^{2(j+1)}{ 2j \choose j}} = \frac{j+1/2}{j+1},\end{equation}
which is increasing. 
Therefore
\begin{equation}\label{Cest}
C= \frac{2}{3} \cdot \frac{51}{52} = \frac{17}{26}.
\end{equation} 

In order to show the estimate from below for $|\langle F, zF \rangle_\omega |$ we just need to show that the tail of this inner product is controlled accordingly, depending on the term $|a_{25}|^2$. Indeed, from the definition of the inner product and applying \eqref{aj} to $a_{j+1}\bar{a_j}$, we obtain:
\[| \langle F, zF \rangle_{\omega}|  \geq \sqrt{\frac{2}{3}}\left( \sum_{j=0}^{\infty} \left(1 + \frac{3}{2(j+1)}\right) |a_j|^2 \omega_{j+1} \right).\]
But all the terms in the sum with $j \geq 25$ can be bounded from below by $\frac{53}{50}\omega_{26} |a_{25}|^2 C^{t}$ for $t=j-25$. Since $C<1$, that means that the tail contributes at most
\[\frac{53}{50} |a_{25}|^2 \omega_{26} \sum_{t=0}^{\infty} C^{t} = \frac{689}{225} |a_{25}|^2\omega_{26} > 3 |a_{25}|^2\omega_{26}.\]
This completes the lower estimate. It remains to show that
\[\sum_{j=26}^{\infty}|a_j|^2\omega_{j+1} < 4 |a_{25}|^2\omega_{26},\] but of course for $j \geq 26$ we have $\omega_{j+1}< \omega_{26}$ and the exponential decay of $|a_j|^2$  in \eqref{aj_ineq} is sufficient for this.

\end{proof}

From Proposition \ref{recurrence}, a sufficient condition for \eqref{inequality} is that the right-hand side of \eqref{eqn501}, $ S_1+  \sqrt{6} |a_{25}|^2 \omega_{26}$, is greater than that of \eqref{eqn502}, $ S_2 + 5 |a_{25}|^2 \omega_{26}$. This means that we only need to check the following inequality:

\begin{lemma}\label{lem2}
\[S_1>S_2 + S_3.\]
\end{lemma}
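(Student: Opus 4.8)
The statement $S_1 > S_2 + S_3$ is, after unravelling the definitions, a comparison of two explicit finite sums indexed by $j = 0,\dots,24$ together with a single correction term $S_3$ coming from the index $j=25$. Since all the quantities involved are rational expressions in the data $\omega_k = \binom{2k}{k}/2^{2k}$ and $a_j = \prod_{t=1}^j \sqrt{2/3}\,(1+\tfrac{3}{2t})$, the cleanest route is to reduce everything to a single rational inequality and verify it. First I would rewrite $S_1 - S_2$ termwise: grouping the $j$-th summands gives
\[
S_1 - S_2 = \sum_{j=0}^{24} |a_j|^2 \omega_{j+1}\left[ \sqrt{\tfrac{2}{3}}\left(1 + \tfrac{3}{2(j+1)}\right) - 1 \right].
\]
The bracketed factor is $\sqrt{2/3}\,\frac{2j+5}{2j+2} - 1$, which is positive exactly when $2j+5 > \sqrt{3/2}\,(2j+2)$, i.e. for $j$ up to some threshold, and then negative. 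So $S_1 - S_2$ is a sum of explicitly signed terms, and the task becomes showing this exceeds $S_3 = (5-\sqrt 6)|a_{25}|^2 \omega_{26}$.

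The key computational steps, in order, are: (i) express each $|a_j|^2 \omega_{j+1}$ in closed form — using \eqref{recurrence} one has $|a_j|^2 = (2/3)^j \prod_{t=1}^j (1+\tfrac{3}{2t})^2$ and $\omega_{j+1} = \binom{2j+2}{j+1}/2^{2j+2}$, so each product term is a concrete positive rational times a power of $\sqrt{2/3}$ — noting that the irrational parts cancel pairwise in $|a_j|^2$ and the factor $\sqrt{2/3}$ in $S_1$ versus $\sqrt 6$ in $S_3$ means everything lands in $\mathbb{Q} + \sqrt 6\,\mathbb{Q}$; (ii) likewise $|a_{25}|^2 \omega_{26}$ is an explicit rational; (iii) collect $S_1 - S_2 - S_3$ as a single element of $\mathbb{Q} + \sqrt{6}\,\mathbb{Q}$, say $A + B\sqrt 6$ with $A, B$ explicit rationals, and check positivity — either directly if $A > 0 > B$ fails one can still bound $\sqrt 6 < 49/20$ from above (or below by $\sqrt 6 > 244/100$) depending on the sign of $B$, reducing to a rational inequality; (iv) confirm that rational inequality, which is the numerical verification flagged as item (3) in the outline. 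I would present the decisive numerical values to enough digits (the paper says indices up to $25$ suffice and calls the inequality "numerically obvious"), and the whole thing collapses to checking one inequality between two explicit rationals.

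\textbf{Main obstacle.} There is no conceptual difficulty here — the content of the lemma was already extracted in the Proposition, where the tail estimates reduced \eqref{inequality} to \eqref{eqn501}–\eqref{eqn502} and hence to $S_1 > S_2 + S_3$. The only real work is bookkeeping: carrying $25$ rational summands with binomial coefficients of moderate size ($\binom{50}{25}$ is a $15$-digit integer) without arithmetic slips, and organizing the cancellation of surds so the final comparison is manifestly a rational one. The mild subtlety is making sure the write-up is \emph{rigorous} rather than merely "plug into a computer": I would do this by exhibiting the exact rational value of $S_1 - S_2$ and of $S_3$ (or lower/upper bounds for each that are easy to compare), so that a reader can in principle verify each fraction by hand. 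I expect the proof in the next section to do exactly this, possibly after replacing $\sqrt 6$ by a rational over/under-estimate to keep all numbers rational.
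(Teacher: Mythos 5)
Your strategy is sound and would work, but it diverges from the paper's in one substantive way, and as written it stops short of the part that actually constitutes the proof. The paper does \emph{not} compute $S_1-S_2$ termwise or pass to exact elements of $\mathbb{Q}+\sqrt{6}\,\mathbb{Q}$. Instead it sets $H_j=|a_j|^2\omega_{j+1}$, $S_4=\sum_{j=0}^{24}H_j/(j+1)$, observes the exact identity $\sqrt{3/2}\,S_1=S_2+\tfrac32 S_4$, and thereby replaces $S_1>S_2+S_3$ with the equivalent inequality $\sqrt{3/2}\,S_4>(1-\sqrt{2/3})S_2+S_3$. This reformulation is the real trick: the gap $S_1-S_2-S_3\approx 0.9$ is only about $2\%$ of $S_1\approx 42$, so a naive interval computation of $S_1$ and $S_2$ separately with three-decimal truncations per term (the paper propagates $H_j=q_jH_{j-1}$ with $q_j$ truncated to $0.001$) accumulates enough error over $25$ terms to threaten the conclusion; after the reformulation the coefficient $1-\sqrt{2/3}\approx 0.18$ shrinks the $S_2$ uncertainty and the comparison becomes $8.525$ versus $7.706$, a comfortable margin. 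Your alternative — exact rational arithmetic with $\sqrt6$ replaced by rational over/under-estimates at the end — sidesteps the precision issue entirely and is logically cleaner, at the cost of carrying very large fractions; it buys rigor without interval bookkeeping, while the paper's identity buys a hand-checkable table. The one genuine shortfall of your write-up is that for this lemma the verification \emph{is} the content: you correctly reduce the claim to "one inequality between two explicit rationals" but never exhibit that inequality or the bounds needed to certify it, whereas the paper supplies the full table of enclosures for each $H_j$ and $H_j/(j+1)$ and sums them. Until those numbers (or your exact rationals $A$, $B$) are actually displayed and compared, the lemma is reduced but not proved.
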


All 3 terms can be computed exactly as algebraic numbers in finite time analytically, but perhaps for some readers it will be enough at this point to mention that numerically $S_1 \approx 42.07$ while $S_2 \approx 41.04$ and $S_3 \approx 0.11$, and so the uncertainty of the result is much smaller than the margin we have. However, we understand this would not constitute a complete proof without the next section, where we will check analytically that Lemma \ref{lem2} is valid.

\section{Checking the finite sum condition}\label{prf2}

Now we proceed to prove Lemma \ref{lem2}.  For $j=0,...,25$, denote by \[H_j = |a_j|^2 \omega_{j+1}, \quad q_j = \frac{(2j+1)(2j+3)^2}{12j(j+1)^2}. \] It will also be useful to denote 
\[S_4 =\sum_{j=0}^{24} \frac{H_j}{j+1}.\] First notice that \begin{equation}\label{eqn401}\sqrt{\frac{3}{2}} S_1 = S_2 + \frac{3}{2} S_4,\end{equation} and that from \eqref{aj_ineq} and \eqref{eqn301} we have 
\[H_j = q_j H_{j-1}, \quad j \geq 1.\] 
Let us determine $H_j$ for each $j$ with sufficient accuracy. Notice that, since $q_j$ is rational and $H_0=1/2$, all $H_j$ are rational. The denominator in $q_j$ can only simplify with a factor from the numerator if $j$ or $3$ have a common factor with $(2j+1)$ or $(2j+3)$. That can only happen when $j$ is congruent to $0$ or $1 \mod 3$ and then only multiples of 3 can cancel out. As a result the simplified rational and approximate list of values of $q_j$ together with the inferred lower and upper bounds for $H_j$ and $H_j / (j+1)$ are indicated in the table below (where all $q_j$ approximations are truncations to 3 digits after the decimal, so they are exact within 0.001 of the correct value and bound that correct value from below).
\begin{table}[h!]
\centering
\begin{tabular}{c | c c c c c}
\hline
j & Numer. $q_j$ & Denom. $q_j$ & $q_j \approx$ & $H_j \in$  & $H_j / (j+1) \in$  \\
\hline
0 & . & . & . &  $[0.500,0.500]$ & $[0.500,0.500]$ 
\\ 1 & 25 & 8 & 3.125 &  [1.562,1.563] & $[0.781,0.782]$
\\ 2 & 245 & 144  & 1.701 & [2.656,2.661] & $[0.885,0.887]$ 
  \\ 3 &  21 & 16  &  1.312 &  [3.484,3.494] & $[0.871,0.874]$ 
\\  4 &   363 & 320 &  1.134 & [3.950,3.966] & $[0.790,0.794]$ 
  \\ 5 & 1859 & 1800  & 1.032   & [4.076,4.097] & $[0.679,0.683]$ 
 \\ 6 &  325 & 336  &  0.967 & [3.941,3.966]  & $[0.563,0.567]$
 \\ 7 &  1445  & 1568   &  0.921 & [3.629,3.657]  & $[0.453,0.458]$
  \\  8 &  6137 & 6912 &  0.887  & [3.218,3.248]  & $[0.357,0.361]$
 \\ 9 &  931 & 1080  &  0.862  & [2.773,2.804]  & $[0.277,0.281]$
 \\ 10 &  3703 & 4400  &  0.841  & [2.332,2.361]  & $[0.212,0.215]$
 \\ 11 &  14375 & 17424  &  0.825 &  [1.923,1.951]  & $[0.160,0.163]$
\\ 12 &  675 & 832  &  0.811  & [1.559,1.585]  & $[0.119,0.122]$
\\ 13 &   7569 & 9464  &  0.799 & [1.245,1.268]  & $[0.088,0.091]$
  \\ 14 &  27869 & 35280   &  0.789 & [0.982,1.002]  & $[0.065,0.067]$
 \\ 15 &  3751 & 4800  &  0.781  & [0.766,0.784]  & $[0.047,0.049]$
 \\ 16 &  13475 & 17408   &  0.774  & [0.592,0.608]  & $[0.034,0.036]$
\\ 17 &  47915 & 62424  &  0.767  & [0.454,0.467]  & $[0.025,0.026]$
\\ 18 &   6253 & 8208  & 0.761 & [0.345,0.356]  & $[0.018,0.019]$
\\ 19 &  21853 & 28880  &  0.756  & [0.260,0.270]  & $[0.013,0.014]$
\\ 20 &  75809 & 100800  & 0.752 & [0.195,0.204]  & $[0.009,0.010]$
\\ 21 &  3225 & 4312  & 0.747 & [0.145,0.153]  & $[0.006,0.007]$
\\ 22 &   33135  & 44528 & 0.744  & [0.107,0.114]  & $[0.004,0.005]$
 \\  23 & 112847 & 152352 & 0.740  & [0.079,0.085]  & $[0.003,0.004]$
 \\ 24 &  14161 & 19200  & 0.737  & [0.058,0.063]  & $[0.002,0.003]$
\\ 25 & 47753 & 65000  & 0.734 & [0.042,0.047]  & $[0.001,0.002]$
 \end{tabular} \end{table}

Summing the lower and upper bounds from $j=0$ to $24$ we can infer from these bounds that $S_2 \in [40.831, 41.227]$, that $S_3 \in [0.107,0.120]$ and that \[\sum_{j=0}^{24} \frac{H_j}{j+1} \in [6.961,7.018].\]
We make use of \eqref{eqn401}, and derive that $S_1 > S_2 + S_3$ happens if and only if
\begin{equation}\label{eqn402}\sqrt{\frac{3}{2}} S_4 > \left(1- \sqrt{\frac{2}{3}}\right) S_2 + S_3.\end{equation}

From the estimates we obtain and using $\sqrt{1.5}\in [1.224,1.225]$ and $\sqrt{2/3} \in [0.816,0.817]$, we see that the left-hand side of \eqref{eqn402} is bounded from below by $8.525$, while its right-hand side is bounded above by $7.586 + 0.120 = 7.706$. This implies that $S_1 - S_2 - S_3 > 0.819$ and concludes the proof.

\newpage

\section{Further work}\label{further}

\bigskip

Here we want to emphasize that our refutal of the Weak Shanks Conjecture only opens the door to a very rich collection of important problems waiting to be investigated. 
For instance, even though we have proved that the zero sets of the OPAs for our function $f$ do get inside the bidisk, we don't know how close to the origin the complex line of zeros for $p_1$ gets nor whether the OPAs of higher degree for that function  have zeros in the bidisk. More generally, we don't know \emph{how far} inside the bidisk zeros of any OPA can get. It would be interesting to study that extremal problem, as we did for one variable in \cite{BKLSS}, as we vary through all orderings of polynomials and/or through all functions in $H^2(\D^2)$. However, we cannot stress enough the great difference between the one and two variable problems there is as a result of the lack of a Fundamental Theorem of Algebra: the fact that zeros are not isolated means that the problem does not reduce to studying any sort of low dimensional polynomials. Moreover, one needs to choose what ``how far'' means in this context: is this the point on the zero set of minimal $\ell_1$, $\ell_2$, or $\ell_\infty$ norm in $\C^2$?

On the other hand, the embedding of functions depending only on $z_1+z_2$ proved useful in our study, partly because there is a natural version of the shift that creates a set of mutually orthogonal classes of monomials.  Perhaps other closed subspaces can take the zeros further inside the disk, but it is not evident to us which spaces one can use. 

One could also consider finding counterexamples with other orderings of monomials.  
If we choose an ordering in which $1$, $z_1$ and $z_2$ are not the first 3 elements of the basis, or an ordering which does yield a symmetric OPA, we may get a different OPA, even for our function $f$. Is there any particular choice of ordering of the monomials for which the sequence of OPAs must be zero-free? It seems unlikely but far from our current reach.  

What happens in higher dimensional polydisks? We do know that our result implies a similar failure of OPAs to be zero-free in higher dimensional polydisk Hardy spaces, from embedding remarks in \cite{PJM} but do the answers to any of the other questions we pose vary in higher dimensions?

Perhaps most interesting, the phenomenon observed here (OPAs having zeros inside the domain for zero-free functions) can also be found in subspaces of $H^2(\D^2)$ called Dirichlet-type spaces $D_\alpha(\D^2)$.  These spaces are defined in a similar fashion to $H^2(\D^2)$ but the norm of $z_1^j z_2^k$ is given by $(j+1)^{\alpha/2} (k+1)^{\alpha/2}.$  For a fixed $f$, the inner products $\langle z_1^j z_2^k f, z_1^l z_2^m f \rangle_{\alpha}$ all vary continuously in terms of the parameter $\alpha$.  Hence for $\alpha >0$ small enough, OPAs for $f$ in $D_\alpha(\D^2)$ will have zeros inside $\D^2.$ How far into the positive $\alpha$ values can we go? Is there a finite supremum for the values of $\alpha$ for which this phenomenon occurs?

What happens in the Hardy space over the unit ball of $\C^n$? 
Is the extremal for the symmetric functions space we obtained actually zero-free? 

Finally, in many Dirichlet-type spaces in $\C$, the zeros of OPAs eventually escape every compact subset of the unit disk, due to the connection between OPAs and reproducing kernels in weighted spaces (see \cite{BKLSS}). Does this behavior prevail in the bidisk? Along the same lines, does Jentzsch's phenomenon related to accumulation of zeros of OPAs proven in \cite{BKLSS} extend to higher dimensions? We hope to return to some of these questions in the future.

\bigskip

\noindent\textbf{Acknowledgements.}   This work has been funded through the grant PID2023-149061NA-I00 by the Generaci\'on de Conocimiento programme and through grant RYC2021-034744-I by the Ram\'on y Cajal programme from Agencia Estatal de Investigaci\'on (Spanish Ministry of Science, Innovation and Universities),  and Simon's Foundation grant 513381.

\end{document}